\documentclass[psamsfonts]{amsart}
\usepackage{amssymb}
\usepackage{dsfont}
\usepackage[all]{xy}
\usepackage[mathcal]{eucal}
\usepackage{verbatim}
\usepackage{cite}

\DeclareMathOperator{\Hom}{Hom}

\DeclareMathOperator{\Ext}{Ext}
\DeclareMathOperator{\ad}{ad}
\DeclareMathOperator{\im}{im}

\newcommand{\Q}{\mathbb{Q}}

\newcommand{\C}{\mathbb{C}}

\newcommand{\TT}{\mathbb{T}}

\providecommand{\To}{\longrightarrow}

\providecommand{\cal}[1]{\mathcal{#1}}

\providecommand{\linepi}{\underline{\pi}}
\providecommand{\bb}[1]{\mathbb{#1}}
\newtheorem{thrm}{Theorem}[section]

\newtheorem{lemma}[thrm]{Lemma}

\newtheorem{conj}[thrm]{Conjecture}
\newtheorem*{thrm1}{Theorem \ref{GHforrat'lTTspectra}}

\theoremstyle{definition}
\newtheorem{defn}[thrm]{Definition}

\theoremstyle{remark}

\makeatletter
\let\c@equation\c@thrm
\makeatother
\numberwithin{equation}{section}


\title{The Equivariant Generating Hypothesis}
\author{Anna Marie Bohmann}

\bibliographystyle{plain}

\begin{document}
\begin{abstract}We state the generating hypothesis in the homotopy category of $G$-spectra for a compact Lie group $G$ and prove that if $G$ is finite, then the generating hypothesis implies the strong generating hypothesis, just as in the non-equivariant case.  We also give an explicit counterexample to the generating hypothesis in the category of rational $S^1$-equivariant spectra.
\end{abstract}
\maketitle
\section{The generating hypothesis in an equivariant context}
The Freyd conjecture, also known as the Freyd generating hypothesis, is a long-standing conjecture in stable homotopy theory.  Let $\mathrm{Ho}\cal{S}$ be the homotopy category of spectra and let $\pi_*(S)\text{-mod}$ be the category of $\pi_*(S)$-modules, where $S$ is the sphere spectrum.
\begin{conj}[Generating hypothesis]\label{WGHspectra}
The restriction of the functor 
\[\pi_*(-)\!:\mathrm{Ho}\cal{S}\to \pi_*(S)\text{-mod}\]
 to the subcategory of finite spectra is faithful.  That is, if a map $f:X\to Y$ between finite spectra $X$ and $Y$ induces the zero map $f_*\!:\pi_*(X)\to \pi_*(Y)$, then $f$ is nullhomotopic.
\end{conj}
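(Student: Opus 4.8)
The plan is to attack Conjecture~\ref{WGHspectra} by the standard strategy of cutting down to a minimal family of test spectra and then inducting up a cellular filtration, with the honest caveat that the decisive input is genuinely hard---indeed it is why the conjecture is still open. As a first step I would reduce to the $p$-local case: the category $\mathrm{Ho}\cal{S}_\Q$ is equivalent to graded $\Q$-vector spaces, where $\pi_*$ is visibly faithful, so a homotopy-trivial map $f\colon X\to Y$ of finite spectra is rationally null, and applying the arithmetic fracture square to the compact spectrum $X$ (so that $[X,-]$ commutes with the relevant limits) leaves only the task of showing that a map of finite $p$-local spectra with $f_*=0$ is null.

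Next I would dualize. Writing $DX$ for the Spanier--Whitehead dual, a map $X\to Y$ is an element $f\in\pi_0(DX\wedge Y)$, and $Z:=DX\wedge Y$ is again a finite $p$-local spectrum; under the evaluation pairing $\pi_*X\otimes\pi_0 Z\to\pi_*(X\wedge Z)\to\pi_*Y$ the hypothesis $f_*=0$ says precisely that $f$ pairs trivially against all of $\pi_*X$. So the goal takes the form: an element of $\pi_0$ of a finite spectrum that is killed by its action on the homotopy of all finite spectra must vanish.

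Then comes the cellular induction, and with it the main obstacle. Filter $X$ by cells with bottom cell $S^n\hookrightarrow X$; the restriction of $f$ to $S^n$ is classified by an element of $\pi_nY$ in the image of $f_*$, hence is null, so $f$ factors as $X\to X/S^n\to Y$ through a spectrum with one fewer cell. One would like to iterate down to the vacuous case of maps out of a sphere, but the argument stalls here: the factored map $X/S^n\to Y$ need not again kill homotopy, since the connecting homomorphism of the cofiber sequence means $\pi_*(X/S^n)$ does not lift into $\pi_*X$, so the inductive hypothesis is not reproduced. Morally one is being asked to rule out maps between finite spectra that are ``phantom relative to the finite subcategory,'' and no known bookkeeping makes this induction terminate.

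This is where the real content lives. A natural sub-goal is to test the claim against the generalized Moore spectra $S/(p^{i_0},v_1^{i_1},\dots,v_{n-1}^{i_{n-1}})$ produced by the periodicity theorem, using $v_n$-periodic homotopy and the nilpotence theorem to bound the possible nonzero homotopy-trivial classes and then to induct on chromatic height; but arranging such an induction to close is exactly what is not known, and the whole of Conjecture~\ref{WGHspectra} is concentrated in that step. Thus the shape of the proposal is: the reductions above are routine, and the entire difficulty---the generating hypothesis itself---reduces to showing that there are no nonzero homotopy-trivial maps between finite spectra.
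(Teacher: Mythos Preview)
The statement you are addressing is a \emph{conjecture}, not a theorem: the paper explicitly records that Freyd's generating hypothesis ``remains open today'' and offers no proof of it. There is therefore nothing in the paper to compare your proposal against; the paper simply states Conjecture~\ref{WGHspectra} as motivation and then passes to the equivariant analogue and to the question of whether weak implies strong.

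Your proposal is not a proof either, and to your credit you say so repeatedly. The reductions you sketch (rationalization, $p$-localization, Spanier--Whitehead dualization, peeling off a bottom cell) are standard and correct as far as they go, but you identify the precise point where the argument fails to close: after factoring $f$ through $X/S^n$, the induced map need not annihilate $\pi_*(X/S^n)$, so the inductive hypothesis is not preserved. That is exactly the well-known obstruction, and nothing in your chromatic paragraph gets past it---you end by restating the conjecture. So what you have written is an honest discussion of why the problem is hard, not a proof attempt that can be evaluated for correctness. If the assignment was to prove the statement, there is a genuine gap, namely the entire content of the conjecture; if the assignment was to comment on it, you should frame it as such rather than as a proof.
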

This conjecture was introduced by Freyd in 1965 \cite{Freyd1966} and remains open today.  Recent work has examined analogous statements in algebraic categories that share many properties of the homotopy category of spectra, such as the derived category of a ring \cite{HLP2007,Lockridge2007} or the stable module category of a finite group \cite{BCCM2007,CCM2008}.  Here we examine the analogous conjecture for the homotopy category of $G$-equivariant spectra for a compact Lie group $G$.

The appropriate generalization of the generating hypothesis to an equivariant context must take into account the homotopy groups of the fixed point spectra for each closed subgroup $H\leq G$.  This is best formulated in the language of Mackey functors. Let $\cal{M}[G]$ be the category of Mackey functors over a compact Lie group $G$, and let $\mathrm{Ho}G\cal{S}$ be the homotopy category of $G$-spectra.
\begin{conj}[Equivariant generating hypothesis]\label{equivGH}  The restriction of the equivariant homotopy functor $\linepi_*^G(-)\!: \mathrm{Ho}G\cal{S}\to \cal{M}[G]$ to the subcategory of finite $G$-spectra is faithful.  That is, if a map $f\!:X\to Y$ between finite $G$-spectra $X$ and $Y$ induces the zero map $f_*\!:\linepi^G_*(X)\to \linepi_*^G(Y)$, then $f$ is nullhomotopic.
\end{conj}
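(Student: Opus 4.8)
Since Conjecture~\ref{equivGH} is, as its name says, a conjecture --- and since the rational $S^1$ example of this paper shows the analogous statement already fails in the rational $S^1$-equivariant category --- I will describe the reduction one would attempt and indicate where it stalls, rather than a complete proof. The plan is to decompose a finite $G$-spectrum into isotropy layers, establish a generating-hypothesis statement on each layer, and then reassemble; the honest expectation is that the reassembly, together with the fact that the bottom layer is the classical generating hypothesis, places the full statement out of reach.

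\textbf{Step 1 (translate the hypothesis subgroup by subgroup).} For $G$ finite and $X$ a finite $G$-spectrum, each genuine fixed-point spectrum $X^H$ is again finite, and evaluating $\linepi_*^G(X)$ at $G/H$ recovers $\pi_*(X^H)$ with its residual $W_G H = N_G H/H$ action; the restriction and transfer maps of the Mackey functor record how these pieces fit together through the geometric-fixed-point and norm constructions. Thus if $f\colon X\to Y$ has $f_* = 0$ on $\linepi_*^G$, then $(f^H)_* = 0$ on $\pi_*(X^H)$, $W_G H$-equivariantly, for every $H\leq G$. \textbf{Step 2 (filter by isotropy).} Filter $X$, and compatibly $Y$, by the tower $E\mathcal{F}_+\wedge X$ running over an ascending chain of families of subgroups. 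The successive subquotients are, up to induction from the Weyl groups, built from the geometric fixed points $\Phi^H X\wedge\widetilde{E\mathcal{P}}$ and are governed by a (Borel- or genuine-)equivariant generating-hypothesis statement over $W_G H$. Granting the generating hypothesis for each such layer, Step 1 forces $f$ to induce the zero map on every associated-graded piece, and one would try to promote this to $f\simeq 0$ by induction along the tower.

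The main obstacle is precisely this reassembly, and it has two faces. First, the bottom layer of the tower \emph{is} the classical generating hypothesis, Conjecture~\ref{WGHspectra}, which is open --- so even the input to the induction is conjectural. Second, and more essentially, the hypothesis $f_* = 0$ on $\linepi_*^G$ does not control the connecting maps of the isotropy tower: a class in $\linepi_*^G(Y)$ may be visible only across filtration stages, by way of a transfer, so the $d_1$ and higher differentials in the homotopy-Mackey-functor spectral sequence of the tower need not vanish. A genuine proof would therefore have to proceed either by a global argument that bypasses the filtration --- a nilpotence- or Brown--Comenetz-duality-type attack, parallel to the strategies proposed non-equivariantly --- or by a delicate verification that those differentials are themselves controlled by the non-equivariant generating hypothesis at the Weyl groups. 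I would expect neither to be within reach of current technique, which is why the substantive results below are the conditional implication that the generating hypothesis implies the strong generating hypothesis for finite $G$, together with the explicit rational $S^1$ counterexample showing that the naive equivariant generalization must be handled with care.
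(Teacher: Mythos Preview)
You have correctly identified the essential point: Conjecture~\ref{equivGH} is stated in the paper as a conjecture, not as a theorem, and the paper offers no proof of it. There is therefore nothing in the paper to compare your proposal against. The paper's actual content concerning this statement is (i) the conditional result Theorem~\ref{equivfaithfulimpliesfull} that, for finite $G$, faithfulness would imply fullness, and (ii) Theorem~\ref{GHforrat'lTTspectra}, which shows the conjecture fails in the rational $\TT$-equivariant category. Your isotropy-filtration sketch and the discussion of where it stalls are your own analysis; the paper does not attempt any such reduction, and in particular does not claim that the equivariant conjecture would follow from, or reduce to, the nonequivariant one. Your observation that the bottom layer of such a filtration already presupposes the open nonequivariant generating hypothesis is a sound reason not to expect a proof, and is consistent with the paper's decision simply to state the conjecture and move on.
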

A map $M\to N$ of Mackey functors is zero if and only if the map $M(G/H)\to N(G/H)$ is zero for each closed subgroup $H\leq G$. Recall that the Mackey functor $\linepi_n^G(X)$ is given at $G/H$ by 
\[\linepi_n^G(X)(G/H)=[S^n\wedge G/H_+,X]=\pi_*^H(X).\]
Hence this conjecture can also be stated as requiring a map $f\!:X\to Y$ of finite $G$-spectra to be nullhomotopic if it induces the zero map $f_*\!:\pi_*^H(X)\to \pi_*^H(Y)$ for all closed $H\leq G$. Since the conjecture does not make use of the full structure of a Mackey functor, we will not give an explicit definition here but instead refer the reader to \cite{Dress1973}.

The above formulation of the  Freyd conjecture (\ref{WGHspectra}) is also known as the \emph{weak generating hypothesis}, as opposed to the \emph{strong generating hypothesis}.  Freyd's original formulation of the  strong generating hypothesis  conjectures that the functor $\pi_*(-)$ is both faithful and full when restricted to the subcategory of finite spectra.  In his initial paper on  the subject, Freyd also proves
\begin{thrm}[\hspace{-.5pt}{\cite[Prop.~9.7]{Freyd1966}}] If the weak generating
  hypothesis (Conj.~\ref{WGHspectra}) holds, then so does the strong
  generating hypothesis.  That is, if the restriction of the functor
  $\pi_*\!:\mathrm{Ho}\cal{S}\to \pi_*(S)\text{-mod}$ to the
  subcategory of finite spectra is faithful, it is
  also full on this subcategory.
\end{thrm}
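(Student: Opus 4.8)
The plan is to prove the remaining half, fullness: given finite spectra $X$ and $Y$ and a $\pi_*(S)$-module homomorphism $\phi\colon\pi_*(X)\to\pi_*(Y)$, produce a map $f\colon X\to Y$ with $f_*=\phi$. (Faithfulness is now being assumed, so such an $f$ is automatically unique.) The one elementary input is that $\pi_*$ is full on wedges of spheres: if $P=\bigvee_i S^{d_i}$ is any wedge of spheres, allowed to be infinite, then $\pi_*(P)=\bigoplus_i\pi_*(S)\langle d_i\rangle$ is free on the canonical classes $e_i$, and $[P,Y]\cong\prod_i\pi_{d_i}(Y)$, so a module homomorphism $\pi_*(P)\to\pi_*(Y)$ is realized by the spectrum map whose $i$-th component is the image of $e_i$; this spectrum map induces the given homomorphism on $\pi_*$ since both are module maps and they agree on the $e_i$.

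Next I would resolve $X$ by spheres. Since $X$ is finite, each $\pi_n(X)$ is a finitely generated abelian group, so $\pi_*(X)$ is countable; choosing module generators $x_1,x_2,\dots$ with $x_j\in\pi_{d_j}(X)$ gives a map $p\colon P=\bigvee_j S^{d_j}\to X$ with $p_*\colon\pi_*(P)\to\pi_*(X)$ surjective. Form the cofiber sequence $A\xrightarrow{a}P\xrightarrow{p}X$, and by the paragraph above realize $\phi\circ p_*\colon\pi_*(P)\to\pi_*(Y)$ by a map $h\colon P\to Y$, so $h_*=\phi\circ p_*$. Then $h$ extends over $X$ exactly when $h\circ a\colon A\to Y$ is nullhomotopic, and $(h\circ a)_*=\phi\circ p_*\circ a_*=0$ because $p\circ a\simeq 0$. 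If $A$ were a finite spectrum this would follow at once from the weak generating hypothesis (Conjecture~\ref{WGHspectra}), and the proof would be complete. The difficulty is that $A$ need not be finite: $\pi_*(S)$ is not Noetherian, and $\pi_*(X)$ need not be finitely generated over it — already $\pi_*(S/2)$ is not — so $P$ may have to be an infinite wedge.

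The way around this is to observe that $A$ is at least a filtered colimit of finite spectra. Writing $P=\mathrm{colim}_n P_n$ with $P_n=\bigvee_{j\le n}S^{d_j}$ and using that fiber sequences of spectra are preserved by sequential colimits, $A=\mathrm{colim}_n A_n$ with $A_n=\mathrm{fib}(P_n\to X)$ finite. For each $n$ the restriction $A_n\to A\xrightarrow{h\circ a}Y$ is a map of finite spectra that is zero on $\pi_*$, hence nullhomotopic by the weak generating hypothesis. By the Milnor exact sequence, $h\circ a$ then lies in $\lim^1_n[\Sigma A_n,Y]$; but each $[\Sigma A_n,Y]$ is a finitely generated abelian group, so the images of $[\Sigma A_m,Y]\to[\Sigma A_n,Y]$ form a descending chain of subgroups of a Noetherian group and stabilize, the tower is Mittag--Leffler, and this $\lim^1$ vanishes. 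Therefore $h\circ a\simeq 0$, so $h$ factors as $\bar f\circ p$ for some $\bar f\colon X\to Y$; then $\bar f_*\circ p_*=h_*=\phi\circ p_*$, and surjectivity of $p_*$ forces $\bar f_*=\phi$. I expect this $\lim^1$/Mittag--Leffler step to be the only real obstacle; the rest is the standard maneuver of lifting along a projective resolution, with the weak generating hypothesis used solely to kill the obstruction class on each finite piece of the resolution.
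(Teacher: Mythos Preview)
Your overall strategy is natural and the first half is correct, but the Mittag--Leffler step contains a genuine error. You argue that for fixed $n$ the descending chain of images $\operatorname{im}\bigl([\Sigma A_m,Y]\to[\Sigma A_n,Y]\bigr)$ stabilizes because $[\Sigma A_n,Y]$ is a finitely generated abelian group, i.e.\ Noetherian. But Noetherian is the \emph{ascending} chain condition; stabilization of a descending chain requires the Artinian condition, which a finitely generated abelian group with elements of infinite order does not satisfy (consider $\Z\supset 2\Z\supset 4\Z\supset\cdots$, whose associated $\varprojlim^1$ is $\Z_2/\Z\neq 0$). So your $\varprojlim^1$-vanishing is unjustified, and the obstruction to extending $h$ over $X$ has not been killed.

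Freyd's argument, which the paper follows closely in proving the equivariant analogue (Theorem~\ref{equivfaithfulimpliesfull}), gets around this by establishing \emph{finiteness} rather than mere finite generation. One works with finite wedges $W_n$ surjecting onto $\pi_j(X)$ for $j\le n$ and constructs partial lifts $f_n\colon X\to Y$ inside the abelian envelope $\cal{A}$ of $\mathrm{Ho}\cal{S}$, using that $Y$ is injective there (Section~\ref{abeliancat}); the differences $f_m-f_n$ then factor through the cokernel $C_n$ of $W_n\to X$ in $\cal{A}$. The crucial input is that for $d$ at least the top homological degree of $X$ one has $\pi_*(C_d)\otimes\Q=0$: in degrees above $d$ because $H_j(X;\Q)=0$ there, and in degrees at most $d$ because $\alpha_d$ was built to surject on $\pi_j$. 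Rational triviality together with finite generation forces $\cal{A}(C_d,Y)$ to be a \emph{finite} group, so by pigeonhole the sequence $\{f_m\}_{m\ge d}$ has a constant cofinal subsequence whose common value is the desired $f$. Your approach can in fact be repaired with the same idea---your cofibers $\Sigma A_n=\mathrm{cofib}(P_n\to X)$ play the role of $C_n$, and the rational-vanishing argument shows that for $n$ large the images in the tower $\{[\Sigma A_m,Y]\}$ are eventually finite, hence genuinely Mittag--Leffler---but the input you need is this finiteness, not Noetherianity.
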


This type of ``faithful implies full'' implication does not always hold, as shown by Hovey, Lockridge, and Puninski in the case of the derived category of a ring \cite{HLP2007}. However, this implication is true in the equivariant case for a finite group $G$.  

\begin{thrm}\label{equivfaithfulimpliesfull} Let $G$ be a finite group, and let $\mathcal{M}[G]$ be the
  category of Mackey functors over $G$. Then, if the functor
  $\linepi_*^G(-):\text{Ho}G\cal{S}\to \cal{M}[G]$ is faithful on
  restriction to the full subcategory of finite spectra, it is also full on this subcategory.
\end{thrm}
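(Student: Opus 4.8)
The plan is to mimic Freyd's proof of \cite[Prop.~9.7]{Freyd1966}, with the sphere spectrum replaced by the family of orbit cells $\Sigma^n\Sigma^\infty G/H_+$ ($H\le G$, $n\in\Z$) --- the compact generators of $\mathrm{Ho}G\cal{S}$ --- and $\pi_*(S)$-modules replaced by modules in $\cal{M}[G]$ over the graded Green functor $R:=\linepi_*^G(S)$. Since $\linepi_*^G$ is lax monoidal, $\linepi_*^G(X)$ is an $R$-module and $g_*$ is $R$-linear for every $g$; so, granting faithfulness of $\linepi_*^G$ on finite $G$-spectra, I must show that for finite $X,Y$ every $R$-module map $\phi\colon\linepi_*^G(X)\to\linepi_*^G(Y)$ has the form $f_*$. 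The first ingredient is that $\linepi_*^G$ is \emph{fully} faithful on wedges of orbit cells: for $P=\bigvee_i\Sigma^{n_i}\Sigma^\infty(G/H_i)_+$ and any $G$-spectrum $Y$ the natural map $[P,Y]^G\to\Hom_R\!\big(\linepi_*^G(P),\linepi_*^G(Y)\big)$ is a bijection. This is the one place the hypothesis $G$ finite is really used: by the Wirthmüller isomorphism --- with \emph{no} representation-sphere twist, since $\dim G=0$ --- one identifies $\linepi_*^G(\Sigma^\infty G/H_+)$ with the induced module $\mathrm{Ind}_H^G\mathrm{Res}^G_H R$, which is projective over $R$ and satisfies $\Hom_R(\mathrm{Ind}_H^G\mathrm{Res}^G_H R,M)\cong M(G/H)$ by the Yoneda lemma for Mackey functors; this matches $[\Sigma^\infty G/H_+,Y]^G=\pi^H_0(Y)$, and likewise after suspending and taking wedges.

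The second, and essential, ingredient is the equivariant form of the homological input of Freyd's argument: faithfulness of $\linepi_*^G$ forces every finite $G$-spectrum $X$ to sit in a cofiber sequence $A\xrightarrow{u}P\xrightarrow{j}X$ in which $A$ and $P$ are (possibly infinite) wedges of orbit cells and $j_*$ is surjective on $\linepi_*^G$ in every degree --- equivalently, $\linepi_*^G(X)$ has projective dimension at most one over $R$ and this short resolution is realized geometrically. Given this, the obstruction argument runs cleanly. Pick such a resolution; by the first ingredient $\phi\circ j_*$ is realized by some $\bar f_0\colon P\to Y$; the composite $\bar f_0\circ u\colon A\to Y$ is zero on $\linepi_*^G$ (as $j_*u_*=0$) and hence nullhomotopic --- here nothing beyond the first ingredient is needed, since $\linepi_*^G$ is faithful on maps out of a wedge of orbit cells --- so $\bar f_0$ descends to $\tilde f\colon X\to Y$ with $\tilde f\circ j=\bar f_0$. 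The discrepancy $\psi:=\tilde f_*-\phi$ annihilates $\im(j_*)$, hence factors through $\im(\partial_*)=\ker\big((\Sigma u)_*\big)\subseteq\linepi_*^G(\Sigma A)$; because $\Sigma A$ is again a wedge of orbit cells, $\psi$ can be absorbed by adjusting $\tilde f$ by $h\circ\partial$ for a suitable $h\colon\Sigma A\to Y$ supplied by the first ingredient, the required $R$-module map extending over $\linepi_*^G(\Sigma A)$ because that module is projective and the obstruction lives in $\Ext^1_R\big(\linepi_*^G(\Sigma A),\linepi_*^G(Y)\big)=0$. The corrected map $f\colon X\to Y$ then satisfies $f_*=\phi$.

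The main obstacle is precisely the second ingredient --- transporting Freyd's ``faithful forces small projective dimension, realized geometrically'' lemma to the category of $R$-modules in $\cal{M}[G]$. One has to produce, from faithfulness alone, a syzygy spectrum $A$ that is itself a wedge of orbit cells; controlling this requires understanding the homological algebra of $R$-modules in Mackey functors --- in particular that the relevant higher $\Ext_R$-groups of homotopy Mackey functors of finite $G$-spectra vanish --- and following Freyd's construction carefully in that setting. Everything else is routine: the cell structure of finite $G$-spectra built from orbit cells, the computation of the homotopy Mackey functors of orbit cells, the interplay of induction, restriction and transfers with the $R$-action, and the Yoneda and Wirthmüller facts behind the first ingredient. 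It is the Wirthmüller input --- and the absence there of a representation-sphere correction --- that pins the argument to finite $G$ rather than to general compact Lie groups.
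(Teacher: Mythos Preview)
Your outline differs substantially from the paper's proof, and the point you flag as ``the main obstacle'' is a genuine gap that the paper does not attempt to fill.

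There is no reason to expect the fiber $A$ of a homotopy-surjection $P\to X$ from a wedge of orbit cells to be itself a wedge of orbit cells. The long exact sequence only gives $\linepi_*^G(A)\hookrightarrow\linepi_*^G(P)$; concluding that $\linepi_*^G(A)$ is projective would require the graded Green functor $R=\linepi_*^G(S)$ to be hereditary, which is neither known nor plausible. Your suggestion to ``follow Freyd's construction carefully'' misreads Freyd: in the non-equivariant case the statement that $\pi_*(X)$ has projective dimension $\le 1$ for finite $X$ is a \emph{consequence} of faithful-implies-full, not an input to it, so invoking it here is circular. (As an aside: were your second ingredient available, your correction step would be superfluous, since $j_*$ surjective and $\tilde f_*\circ j_*=\phi\circ j_*$ already force $\tilde f_*=\phi$; the connecting map $\partial_*$ you propose to factor through is zero.)

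The paper's route is quite different. It works in Freyd's abelian envelope $\cal{A}(G)$ of $\mathrm{Ho}G\cal{S}$, where kernels and cokernels exist formally and every $G$-spectrum is both projective and injective. For each $n$ one builds a \emph{finite} wedge $W_n$ of orbit cells in degrees $\le n$ surjecting onto $\linepi_j^G(X)$ for $j\le n$; faithfulness (extended to $\cal{A}(G)$ via Lemma~\ref{extendGH}) together with injectivity of $Y$ in $\cal{A}(G)$ yields $f_n\colon X\to Y$ with $(f_n)_*=\phi$ in degrees $\le n$. The differences $f_m-f_n$ factor through the abelian-envelope cokernel $C_d$ of $W_d\to X$, and the decisive step is a \emph{finiteness} argument: the Greenlees--May isomorphism
\[
[Z,Z']^G\otimes\Q\;\cong\;\prod_n\Hom_{\cal{M}[G]}\bigl(\linepi_n^G(Z)\otimes\Q,\,\linepi_n^G(Z')\otimes\Q\bigr)
\]
for finite $G$, extended to $\cal{A}(G)$, forces $\cal{A}(G)(C_d,Y)\otimes\Q=0$ once $d$ exceeds the top Bredon-homology degree of $X$. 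Hence $\cal{A}(G)(C_d,Y)$ is finite, and a constant cofinal subsequence of $\{f_n\}$ furnishes the desired $f$. The hypothesis that $G$ is finite enters through this rational equivalence and through finiteness of the set of conjugacy classes of subgroups (so that each $W_n$ is a finite spectrum), not merely through the absence of a Wirthm\"uller twist as you suggest.
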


 We also consider the generating hypothesis in the category of rational $G$-spectra.  For a finite group $G$, Greenlees and May \cite[App. A]{GM1995} have proved that there is a natural equivalence between rational $G$-spectra and the category of graded Mackey functors over $\underline{A}\otimes \Q$, where $\underline{A}$ is the Burnside ring Mackey functor.  This equivalence is induced by the natural isomorphism 
\begin{align}\label{rat'lhomotopyisequivalence}\vartheta: [X_0,Y_0]^G\to \prod_n \Hom_{\mathcal{M}[G]}(\linepi_n^G(X_0),\linepi_n^G(Y_0))
\end{align}
for rational $G$-spectra $X_0$ and $Y_0$.  This isomorphism  completely algebraicizes the rational $G$-stable homotopy category, and thus in particular establishes that the strong generating hypothesis holds in this category. As an aside, we note that the isomorphism (\ref{rat'lhomotopyisequivalence}) is also used in proving Theorem \ref{equivfaithfulimpliesfull}.

For infinite compact Lie groups the picture is quite different. We  prove that the weak generating hypothesis fails rationally in the simplest case of a non-finite group of equivariance, that is, for the category of rational $\TT$--spectra, where $\TT$ is the compact Lie group $S^1$.

\begin{thrm}\label{GHforrat'lTTspectra}
The generalized Freyd conjecture (\ref{equivGH})  does not hold for the category of rational $\TT$-equivariant spectra.
\end{thrm}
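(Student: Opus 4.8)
Our plan is to deduce this from Greenlees's algebraic model for rational $\TT$-spectra. For a finite group $G$ the relevant category of rational Mackey functors is semisimple — this is exactly what makes the isomorphism (\ref{rat'lhomotopyisequivalence}) hold without error terms, and hence what forces the strong generating hypothesis — whereas for $\TT=S^1$ Greenlees's work instead produces an abelian category $\cal A(\TT)$, assembled from the poset of closed subgroups of $\TT$ (an object being a module $N$ over a ring $\cal O_{\cal F}$ of functions on the finite subgroups together with a ``nub'' $\TT$-datum $V$ and a gluing map $\beta\colon N\to t^{\cal F}V$ which is required to become an isomorphism after inverting Euler classes) and an equivalence
\[\mathrm{Ho}\TT\cal{S}_\Q \;\simeq\; D(\cal A(\TT))\]
under which $\linepi^{\TT}_*(-)$ corresponds to the graded homology functor $H_*\colon D(\cal A(\TT))\to\cal A(\TT)$ and finite $\TT$-spectra correspond to compact objects. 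In particular a morphism $f$ of rational $\TT$-spectra has $\linepi^{\TT}_*(f)=0$ precisely when $H_*$ of the corresponding morphism of complexes is zero.

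The key structural point is that $\cal A(\TT)$ is \emph{hereditary}: it has injective dimension $1$, so $\Ext^{\geq 2}_{\cal A(\TT)}=0$. Hence every object of $D(\cal A(\TT))$ is the direct sum of its shifted homology objects, and for rational $\TT$-spectra $X$, $Y$ there is a split short exact sequence
\[0\to\bigoplus_n\Ext^1_{\cal A(\TT)}\!\big(H_{n-1}(MX),H_n(MY)\big)\to[X,Y]^{\TT}\to\bigoplus_n\Hom_{\cal A(\TT)}\!\big(H_n(MX),H_n(MY)\big)\to 0\]
under which $\linepi^{\TT}_*$ of a morphism is simply its image in the right-hand factor. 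A nonzero element of the left-hand $\Ext^1$-term, for finite $X$ and $Y$, is therefore exactly a nonzero map between finite $\TT$-spectra inducing zero on $\linepi^{\TT}_*$. So Conjecture \ref{equivGH} fails rationally for $\TT$ as soon as there exist compact objects $M$, $M'$ of $\cal A(\TT)$ with $\Ext^1_{\cal A(\TT)}(M,M')\neq 0$; in other words, the only obstruction to the equivariant generating hypothesis holding rationally over $\TT$ is that $\cal A(\TT)$, unlike the semisimple category governing the finite-group case, fails to be semisimple on its compact part.

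It remains to exhibit such an extension explicitly, and this is the main work. The plan is to take $M$ and $M'$ to be homology objects of finite $\TT$-spectra built from orbit cells $\TT/C_n{}_+$ and representation spheres $S^{z^n}$ — for instance using the cofibre sequences $\TT/C_n{}_+\to S^0\xrightarrow{e_n}S^{z^n}$ attached to the Euler classes — and then to compute the relevant $\Ext^1$-group directly inside $\cal A(\TT)$, working with its $\cal O_{\cal F}$-modules, Euler classes and gluing maps. The hard part is precisely this computation: one must show that a suitable extension of compact objects is genuinely nonsplit, and the reason it is nonsplit is that the gluing datum $\beta$ — which ties the $\TT$-fixed-point information to the data at the finite subgroups and is structure invisible to $\linepi^{\TT}_*$ — obstructs the splitting. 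Transporting the resulting nonzero class back across the equivalence then yields an explicit nonzero map $f\colon X\to Y$ of finite rational $\TT$-spectra with $\linepi^{\TT}_*(f)=0$, as required.
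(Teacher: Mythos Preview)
Your proposal takes a genuinely different route from the paper, and while the strategy is sound in outline, it stops precisely where the real work begins.

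The paper does not invoke the full abelian model $\cal A(\TT)$ at all. It restricts to \emph{free} rational $\TT$-spectra, where Greenlees's description collapses to graded $\Q[c]$-modules (with $|c|=-2$) and one has a natural Adams short exact sequence
\[0\to\Ext_{\Q[c]}(\pi^\TT_*(\Sigma\bb X),\pi^\TT_*(\bb Y))\to[\bb X,\bb Y]^\TT_*\to\Hom_{\Q[c]}(\pi^\TT_*(\bb X),\pi^\TT_*(\bb Y))\to 0.\]
The right-hand term here only records $\pi_*^\TT$, not $\pi_*^H$ for proper $H<\TT$, so an element of the $\Ext$ term is not yet a counterexample. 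The paper handles the proper subgroups by a separate device: it maps along $p\colon\bb W=\Sigma^\infty(\TT_+\wedge X_+)\to\bb X$, identifies $[\bb W,\bb Y]^\TT_*\cong\Hom_\Q(\pi_*(\bb X),\pi_*(\bb Y))$, and uses Lemma~\ref{ratlhomotopygrps} to show that for free spaces all proper-subgroup homotopy coincides with the nonequivariant homotopy. A counterexample is then a nonzero element of the kernel of
\[\Ext_{\Q[c]}(\pi_*^\TT(\Sigma\bb X),\pi_*^\TT(\bb Y))\longrightarrow\Ext_{\Q[c]}(\pi_*^\TT(\Sigma\bb W),\pi_*^\TT(\bb Y)),\]
and the paper computes this kernel completely for $\bb X=\bb S(a)$, $\bb Y=\bb S(b)$ the suspension spectra of unit spheres in $\C^a$, $\C^b$: one has $\pi_*^\TT(\bb S(a))\cong\Sigma^{2a-1}\Q[c]/(c^a)$, and an explicit resolution shows the kernel is nonzero whenever $a,b>1$.

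Your framework is in one respect cleaner: if $H_*$ in $D(\cal A(\TT))$ really detects the entire Mackey functor $\linepi_*^\TT$, then any nonzero $\Ext^1$ between compact objects of $\cal A(\TT)$ is immediately a counterexample, and the separate $\bb W$-argument is unnecessary. But you should not treat this identification as automatic: objects of $\cal A(\TT)$ are not Mackey functors, and the implication ``$H_*(Mf)=0$ in $\cal A(\TT)$'' $\Leftrightarrow$ ``$\pi_*^H(f)=0$ for every closed $H\le\TT$'' requires knowing that the evaluation functors at all subgroups (including the categorical, not merely geometric, $\TT$-fixed points) are jointly faithful on $\cal A(\TT)$. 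This is true, but it is not free, and it is exactly what the paper's more pedestrian $\bb W$-trick circumvents.

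The genuine gap, though, is that your proposal halts at the reduction. You correctly isolate the problem as exhibiting compact $M,M'$ with $\Ext^1_{\cal A(\TT)}(M,M')\neq 0$, and you point toward cells built from $\TT/C_n{}_+$ and $S^{z^n}$, but you neither fix a specific pair nor compute any $\Ext$ group; the sentence ``this is the main work'' is accurate, and that work is absent. The paper, by contrast, carries out its $\Q[c]$-module computation in full, with explicit projective resolutions and an explicit nonzero kernel. Without an analogous concrete calculation your proposal is a correct plan rather than a proof.
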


This result suggests that the generating hypothesis should not hold for other infinite groups of equivariance, and it also makes immaterial the question of whether the weak generating hypothesis implies the strong generating hypothesis in the category of rational $\TT$-equivariant spectra. 

We prove Theorem \ref{equivfaithfulimpliesfull} in Section \ref{prooffaithfull}. As a technical tool for this proof, in Section \ref{abeliancat} we introduce the construction of an abelian envelope of a triangulated category.  In Section \ref{rational}, we establish Theorem \ref{GHforrat'lTTspectra} by giving a specific counterexample to the generating hypothesis in the category of rational $\TT$--spectra.  The structure of this proof is due in part to John Greenlees, and the author would like to thank him for his guidance and conversation.

\section{The abelian envelope of a triangulated category}\label{abeliancat}
Our proof of Theorem \ref{equivfaithfulimpliesfull} follows Freyd's proof that the weak generating hypothesis implies the strong generating hypothesis for the category of spectra \cite[Prop.~9.7]{Freyd1966}.  In particular, we make use of  the abelian category $\cal{A}(G)$ associated to the equivariant stable homotopy category. This is a special case of the general construction of an abelian envelope of a triangulated category.  This construction was first given in \cite{Freyd1966}; for a more modern treatment, including several equivalent constructions, see chapter 5 of Neeman's book on triangulated categories \cite{Neeman2001}.

Given a triangulated category $\cal{T}$, its abelian envelope $\cal{A(T)}$ is an abelian category with a full and faithful inclusion $\iota\!: \cal{T}\to \cal{A(T)}$ with the properties that
\begin{equation}\label{propertiesofA}\hfill
\begin{minipage}{.91\textwidth}
\begin{itemize}
\item for any object $s$ in $\cal{T}$, its image $\iota(s)$ in $\cal{A(T)}$ is projective and injective, and every projective or injective object in $\cal{A(T)}$ is a direct summand of an object in the image of $\iota$,
\item $\cal{A(T)}$ is self-dual, and
\item any homological functor $H\!:\cal{T}\to \cal{A}$, where $\cal{A}$ is an abelian category, extends uniquely to an additive functor $\cal{A(T)}\to \cal{A}$.
\end{itemize}
\end{minipage}
\end{equation}
By homological functor, we mean a functor that takes exact triangles of $\cal{T}$ to exact sequences in $\cal{A}$. Note that self-duality implies we get a similar extension of any cohomological functor on $\cal{T}$ as well. 

We define $\cal{A(T)}$ to be the following quotient of the category of maps in $\cal{T}$; this is Freyd's original description \cite[\S 3]{Freyd1966} and is also given by Neeman \cite[5.2.1]{Neeman2001}.
\begin{defn}\label{defnabeliancat} The objects of $\cal{A(T)}$ are maps $s\to t$ in $\cal{T}$, and  morphisms in $\cal{A(T)}$ are commutative diagrams
\[
\xymatrix{s\ar[r]\ar[d]&t\ar[d]\\
s'\ar[r]&t'}
\]
under the additive equivalence relation defined by setting such a morphism equal to zero if the composite $s\to t\to t'$ or $s\to s'\to t'$ is zero.  The embedding $\iota\!:\cal{T}\to \cal{A}(\cal{T})$ is given by sending an object $s$ to the identity morphism $s\to s$.
\end{defn}

We can think of the functor $\iota$ as a universal homological functor from $\cal{T}$ to an abelian category.  Because $\iota$ is full and faithful, we will usually identify an object $s\in\cal{T}$ with its image under $\iota$. For a proof that $\cal{A(T)}$ has the desired properties (\ref{propertiesofA}), see \cite[Ch.~5]{Neeman2001}.

In order to make use of this construction in proving Theorem \ref{equivfaithfulimpliesfull}, we need to show that if a version of the generating hypothesis holds in a triangulated category $\cal{T}$, it also holds in the category $\cal{A(T)}$.

\begin{lemma} \label{extendGH}Let $\cal{T}$ be a triangulated category, let $\cal{A}$ be an abelian category,  and let $H\!:\cal{T}\to \cal{A}$ be a homological functor. If $H$ is faithful on restriction to the category of compact objects of $\cal{T}$, then its extension to $\cal{A(T)}$ is also faithful on restriction to objects with a projective resolution by compact objects of $\cal{T}$.
\end{lemma}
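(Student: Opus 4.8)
The plan is to reduce the faithfulness statement in $\cal{A(T)}$ to the hypothesis in $\cal{T}$ by exploiting the explicit description of objects and morphisms in $\cal{A(T)}$ from Definition \ref{defnabeliancat}, together with the fact that an object with a projective resolution by compact objects of $\cal{T}$ can be presented, up to the equivalence relation, as a map $a \to b$ with $a$ and $b$ compact. First I would write the extension of $H$ to $\cal{A(T)}$ — call it $\widetilde H$ — and recall its value on an object $(s \xrightarrow{g} t)$: since $\widetilde H$ is additive and agrees with $H$ on objects of $\cal{T}$, and since $(s\xrightarrow{g}t)$ sits in an exact sequence relating it to $\iota(s)$ and $\iota(t)$, we get that $\widetilde H(s\xrightarrow{g}t) = \operatorname{coker}\big(H(s)\xrightarrow{H(g)}H(t)\big)$, or dually an image/kernel description. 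The key observation is that if $X$ has a projective resolution by compact objects, then $X$ is the cokernel (inside $\cal{A(T)}$) of a map $P_1 \to P_0$ with $P_0, P_1$ in the image of $\iota$ restricted to compacts, i.e. $X \cong (P_1 \to P_0)$ as objects of $\cal{A(T)}$.

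Next, given a morphism $\phi\!: X \to Y$ in $\cal{A(T)}$ between two such objects with $\widetilde H(\phi) = 0$, I would lift $\phi$ to a commutative square of compact objects
\[
\xymatrix{P_1\ar[r]\ar[d]_{f_1}&P_0\ar[d]^{f_0}\\ Q_1\ar[r]&Q_0,}
\]
representing $\phi$, where the rows present $X$ and $Y$ respectively. Applying $H$ and using that $H$ is homological, the hypothesis $\widetilde H(\phi)=0$ translates into the statement that $H(f_0)$ factors through the image of $H(P_1 \to P_0)$, equivalently (chasing the cokernel description) that the induced map on cokernels of $H$ vanishes. The goal is then to show $\phi$ is zero in $\cal{A(T)}$, which by the equivalence relation of Definition \ref{defnabeliancat} means producing a diagonal factorization: the composite $P_1 \to P_0 \xrightarrow{f_0} Q_0$ should be zero, or the composite $P_0 \xrightarrow{f_0} Q_0 \to$ (the next term) should vanish after a suitable presentation.

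To get there I would use faithfulness of $H$ on compacts exactly once, at the crucial point: the relevant composite of compact objects, which $H$ sends to $0$ (this is what $\widetilde H(\phi)=0$ gives us after the diagram chase), must itself be zero by the hypothesis. Concretely, after possibly extending the commutative square to a map of distinguished triangles realizing the two projective resolutions — here I would invoke that $P_1 \to P_0$ and $Q_1\to Q_0$ extend to exact triangles in $\cal{T}$ since they come from resolutions by compacts — the vanishing of $\widetilde H(\phi)$ forces a certain map between compact objects to lie in the kernel of $H$, hence to be null, and this null map is precisely the obstruction to $\phi$ being zero in $\cal{A(T)}$. I expect the main obstacle to be bookkeeping the equivalence relation correctly: showing that "$\widetilde H(\phi) = 0$ in $\cal{A}$" really does produce, after lifting to $\cal{T}$, a composite of honest maps of compact objects (not just a map that is zero after some further quotient) to which the hypothesis applies — in other words, carefully matching the cokernel/image bookkeeping in $\cal{A}$ with the diagonal-factorization bookkeeping in $\cal{A(T)}$, and ensuring the compact resolutions can be chosen compatibly with the given morphism $\phi$.
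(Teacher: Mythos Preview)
Your approach is essentially the paper's, but you make it harder than it needs to be. The paper uses the \emph{image} description of the extension: $\widetilde H(s\xrightarrow{i}t)=\im\big(H(s)\to H(t)\big)$. With this choice, $\widetilde H(\phi)=0$ says precisely that $\im H(i)\hookrightarrow H(t)\to H(t')$ vanishes, hence the composite $H(s)\to H(t)\to H(t')=H(s)\to H(s')\to H(t')$ is zero; faithfulness on compacts then gives $s\to s'\to t'=0$, which is exactly the equivalence-relation criterion for $\phi=0$. That is the entire argument---no diagram chase beyond this, no extension to distinguished triangles, no compatibility issues in choosing resolutions.

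Your cokernel description is where the ``bookkeeping obstacle'' you anticipate actually lives: from vanishing on cokernels you only learn that $H(f_0)$ factors through $\im\big(H(Q_1)\to H(Q_0)\big)$ (not through $\im\big(H(P_1)\to H(P_0)\big)$ as you wrote), and this does not directly hand you a composite of compact maps killed by $H$. You would then need the triangle extension you mention to recover such a composite. That detour is avoidable: switch to the image description and the proof collapses to three lines.
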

\begin{proof}  Objects in $\cal{A(T)}$ with a projective resolution by
  compact objects of $\cal{T}$ are the same as objects $s\to t$ where
  $s$ and $t$ are compact objects of $\cal{T}$ \cite[\S
  5]{Neeman2001}. The value of a homological functor $H$ on an object
  $s\to t$ in $\cal{A}(\cal{T})$ is given by the image of the map $H(s)\to H(t)$. If $s$, $t$, $s'$ and $t'$ are compact objects of $\cal{T}$ and we have a morphism 
\[
\xymatrix{s\ar[r]^{i}\ar[d]&t\ar[d]\\
s'\ar[r]^{i'}&t'}
\]
in $\cal{A(T)}$,then by definition the map $H(s\to t)\to H(s'\to t')$
is zero if and only if the induced map $\im H(i)\to \im H(i')$ is zero.  But if this is the case, then the map $H(s)\to H(s')\to H(t')$ is zero.  Hence, assuming $H$ is faithful on compact objects of $\cal{T}$, the map $s\to s'\to t'$ is zero, and so our original morphism
is also zero.
\end{proof}
This lemma says that in any triangulated category where a version of the generating hypothesis holds, the generating hypothesis extends to the abelian envelope of the category.  We will make use of this result in the case $\cal{T}=\text{Ho}G\cal{S}$ in the next section.

\section{The proof of Theorem \ref{equivfaithfulimpliesfull}: Faithful implies full}\label{prooffaithfull}

We now prove that for a finite group $G$, the generating hypothesis for $G$-spectra implies the strong generating hypothesis for $G$-spectra.  Let $\cal{A}(G)$ be the abelian envelope of the triangulated category $\text{Ho}G\cal{S}$ as discussed in Section \ref{abeliancat}.  In particular,  $\cal{A}(G)$ is an abelian category with a full embedding $\text{Ho}G\cal{S}\to\cal{A}(G)$ such that the image of every $G$-spectrum is both projective and injective.  Since the embedding is full and faithful, we identify the group of maps $\cal{A}(G)(X,Y)$ with $[X,Y]^G$ for $G$-spectra $X$ and $Y$.

\begin{proof}[Proof of Theorem \ref{equivfaithfulimpliesfull}]
Let $G$ be a finite group. Let $X$ and $Y$ be finite $G$-CW spectra,
and let $S$ be the sphere $G$-spectrum.  Let $\linepi_*^G(S)$ be the graded Green functor that is $\linepi_n^G(S)$ in degree $n$, meaning that $\linepi_*^G(S)$  is a graded Mackey functor with a compatible ring structure.   Note that $\linepi_0^G(S)$ is the Burnside ring Green functor.   Similarly, let $\linepi_*^G(X)$ be the graded Mackey functor that is $\linepi_n^G(X)$ in degree $n$. Composition makes $\linepi_*^G(X)$ a module over $\linepi_*^G(S)$.

Suppose we have a natural transformation $\phi:\linepi_*^G(X)\to
\linepi_*^G(Y)$. To prove that $\linepi_*^G(-)$ is
full, we must find a map $f\!:X\to Y$ such that
$\linepi_*^G(f)=\phi$.  The structure of the proof
is in two steps.  First, we use the embedding
$\iota\!:\mathrm{Ho}G\cal{S}\to \cal{A}(G)$ to construct a sequence of
maps $\{f_n\!:X\to Y\}$ such that  $(f_{n})_*\!:\linepi_j^G(X)\to
\linepi_j^G(Y)$ agrees with $\phi$ for $j\leq n$.  We then show
that $\{f_n\}$ has a constant cofinal subsequence whose value is the
desired map $f$.

For each conjugacy class of
 subgroups $H\leq G$ and each $j$, choose a set of generators of $\linepi_j^G(X)(G/H)=[\Sigma^j(S\wedge G/H_+),X]^G$.  Denote these generators by $\{\alpha_{j,H,i}\!:\Sigma^j(S\wedge G/H_+)\to X\}$. Let 
\[W_n=\bigvee_{j\leq n}\bigvee_{H\leq G}\bigvee_i \Sigma^j(S\wedge G/H_+).\]
Since $X$ is a finite spectrum, its homotopy groups are finitely generated and thus $W_n$ is a finite spectrum as well. Let $\alpha_n\!:W_n\to X$ be the map given on wedge summands by $\alpha_{H,j,i}$.  Similarly,  let $h_n\!:W_n\to Y$ be the map given on wedge summands by $\phi(\alpha_{H,j,i})$.

Let $\iota_n\!:K_n\to W_n$ be the kernel of the map $\alpha_n\!:W_n\to X$ in  $\cal{A}(G)$. Thus, the sequence $0\to K_n\to W_n\to X$ is exact in $\cal{A}(G)$.  We first show that
$\linepi_*^G(h_n\circ \iota_n)=0$.  Then we apply the
assumption that $\linepi_*^G(-)$ is faithful to conclude that $h_n\circ\iota_n$ is zero.  By construction,
and because $\phi$ is
a natural transformation of Mackey functors,
\begin{align*}
\linepi_*^G(h_n\circ\iota_n)&=\linepi_*^G(h_n)\circ\linepi_*^G(\iota_n)\\
&=\linepi_*^G(\phi(\alpha_n))\circ\linepi_*^G(\iota_n)\\
&=\phi(\linepi_*^G(\alpha_n))\circ\linepi_*^G(\iota_n)\\
&=\phi(\linepi_*^G(\alpha_n\circ\iota_n)).
\end{align*}
 Since $\alpha_n\circ \iota_n =0$ in $\cal{A}(G)$, we see that
$\linepi_*^G(h_n\circ
\iota_n)=\phi(\linepi_*^G(\alpha_n\circ \iota_n))=0$. 
We are assuming the generating hypothesis holds in
$\textrm{Ho}G\cal{S}$, so we can apply Lemma \ref{extendGH} to conclude that
$h_n\circ \iota_n:K_n\to W_n\to Y$ is zero in $\cal{A}(G)$. 

Now we construct our maps $\{f_n\}$. Let $\zeta_n\!:W_n\to D_n$ be the cokernel of the map $\iota_n\!:K_n\to
W_n$. Note that, as $K_n$ is the kernel of $\alpha_n\!:W_n\to X$, the
map $\alpha_n\!:W_n\to X$ factors through an injection
$\nu_n\!:D_n\to X$.  Since $h_n\circ\iota_n=0$, the map
$h_n$ factors through a map $\gamma_n\!:D_n\to Y$.  Also let
$\mu_n\!:X\to C_n$ be the cokernel of $\alpha_n\!:W_n\to X$.  The object
$Y$ is injective in the category $\cal{A}(G)$, as mentioned in (\ref{propertiesofA}), so the map $\gamma_n:\!D_n\to Y$ extends to a map $f_n\!:X\to Y$ such that $f_n\circ \nu_n=\gamma_n$.  We
summarize these definitions in the following commutative diagram in
$\cal{A}(G)$.
\[
\xymatrix{0\ar[r] & K_n\ar[r]^-{\iota_n}\ar[rdd]_-{0} &W_n\ar[rr]^-{\alpha_n}\ar[dr]^-{\zeta_n}\ar[dd]_-{h_n} &&X\ar[r]^-{\mu_n}\ar@/^6ex/[ddll]^{f_n} &C_n\ar[r] & 0\\
&&& D_n\ar[ur]^-{\nu_n}\ar[dl]_-{\gamma_n}\\
&&Y}
\]
We have constructed $\alpha_n$ such that $\linepi_*^G(\alpha_n)$
is a surjection $\linepi_j^G(W_n)\to\linepi_j^G(X)$ for $j\leq n$.
Since
$\linepi_*^G(f_n)\linepi_*^G(\alpha_n)=\linepi_*^G(h_n)=\phi\left(\linepi_*^G(\alpha_n)\right)$,
it follows that $\linepi_j^G(f_n)=\phi$ for $j\leq n$.  We need to
construct a map $f=f_\infty$ that has this property for all $n$.

By construction, we
have an inclusion $i_{n}\!:W_n\to W_{n+1}$ such that
$\alpha_n=\alpha_{n+1}\circ i_{n}$ and $h_n=h_{n+1}\circ i_{n}$.
For any $m>n$, iterating these inclusions of summands gives an inclusion
$W_n\to W_m$  which makes the diagram  
\[
\xymatrix{W_n\ar[r]\ar[d]_-{h_n}\ar@/^3ex/[rr]^{\alpha_n}& W_m\ar[dl]^-{h_m}\ar[r]_-{\alpha_m}&X\ar@/^3ex/[dll]^-{f_m}\\
Y}
\]
commute.
Hence the restriction of the map $f_m\circ\alpha_m\!:W_m\to X\to Y$ to
the summands contained in $W_n$ is the map $h_n$; that is, $f_m\circ\alpha_n=h_n$. By the universal
property of the cokernel $\mu_n\!:X\to C_n$, we obtain a factorization
of the map $f_m-f_n$ through $C_n$.
\[
\xymatrix{W_n\ar[r]^-{\alpha_n}\ar@/_2ex/[dr]_-{0}&X\ar[r]^{\mu_n}\ar[d]_-{f_m-f_n\!\!}&C_n\ar@{.>}[dl]\\
&Y}
\]

Since $X$ is finite, we may choose $d$ large enough so that
$H_j^G(X;\underline{A})=0$ for all $j>d$, where $\underline{A}$ is the
Burnside ring Mackey functor for the group $G$ and $H^G_*$ is Bredon homology. We claim that $\cal{A}(G)(C_d,Y)$ is finite, or equivalently, since $\cal{A}(G)(C_d,Y)$ is finitely generated, that the group $\cal{A}(G)(C_d,Y)\otimes \Q=0$. From this it follows that we may extract the desired  constant
cofinal subsequence whose common value $f$ satisfies
$\linepi_*^G(f)=\phi$.

 First consider $\underline{H}_*^G(C_d;\underline{A})$.  Since the functor
 $\underline{H}_j^G(-;\underline{A})\!:\cal{A}(G)\to \cal{M}[G]$ is exact, for each $H\leq G$ we have an exact sequence of abelian groups
\begin{align}\label{homologyexactseq} \underline{H}_j^G(W_d;\underline{A})(G/H)\stackrel{(\alpha_d)_*}{\To}
 \underline{H}_j^G(X;\underline{A})(G/H)\to
 \underline{H}_j^G(C_d;\underline{A})(G/H)\to 0
\end{align}
for each $j$.  When $j>d$, the Mackey functor $\underline{H}_j^G(X;\underline{A})$ is zero, and
thus $\underline{H}_j^G(C_d;\underline{A})$ is zero as well.  As May
and Greenlees show \cite[App.~A]{GM1995}, the rationalization of the sphere $G$-spectrum is
the Eilenberg--Maclane $G$-spectrum $H(\underline{A}\otimes\Q)$.  Thus
we can equate the rationalization of the sequence
(\ref{homologyexactseq}) with the sequence
\[ \linepi_j(W_d)\otimes\Q\stackrel{(\alpha_{d})_*}{\To} \linepi_j(X)\otimes\Q\to
\linepi_j(C_d)\otimes \Q\to 0.\]
When $j\leq d$, the map $\alpha_d$ is surjective on rational homotopy
by construction, so $\linepi_j(C_d)\otimes \Q=0$ for $j\leq
d$. Since $\linepi_j(C_d)\otimes\Q=\underline{H}_j^G(C_d;\underline{A})\otimes\Q=0$ for $j>d$, the graded Mackey functor $\linepi_*^G(C_d)\otimes\Q$ is zero.

Denote by $Z_0$ the rationalization of a $G$-spectrum $Z$; then
\[[Z,Z']_*^G\otimes \Q=[Z_0,Z'_0]_*^G=[Z,Z'_0]_*^G.\]  By restricting to the category of rational $G$-spectra, we in
principal get an abelian category $\cal{A}_0(G)$ for rational stable
homotopy as in the construction of Definition
\ref{defnabeliancat}. However, the rationalization functor
$\text{Ho}G\cal{S}\to \text{Ho}G\cal{S}_0$ induces an extension
$\cal{A}(G)\to \cal{A}_0(G)$.  This functor is an equivalence after
tensoring with $\Q$; this does not change the category $\cal{A}_0(G)$
since $\cal{A}_0(G)$ is already rational.  Hence we do not distinguish
between $\cal{A}(G)\otimes\Q$ and $\cal{A}_0(G)$.

For $G$-spectra $Z$ and $Z'$, we restate the isomorphism (\ref{rat'lhomotopyisequivalence}) in terms of
 tensoring with $\Q$:
\begin{equation}\label{Qtensorratlhtpyequiv}\vartheta: [Z,Z']^G\otimes\Q\to \prod_n
 \Hom_{\cal{M}[G]}(\linepi_n^G(Z)\otimes\Q,\linepi_n^G(Z')\otimes\Q).
\end{equation}
 Moreover, for a fixed $Z'$, both sides of (\ref{Qtensorratlhtpyequiv})  are cohomology theories on
$G$-spectra, so this isomorphism extends to $\cal{A}(G)$. Setting $Z'=Y$ then allows us to conclude that the group $\cal{A}(G)(C_d,Y)\otimes\Q$ is zero. More precisely, the columns are
 exact in the following diagram.
\[
\xymatrix{[W_d,Y]^G\otimes \Q \ar[r]\ar[d]&
  \prod\limits_n\Hom_{\mathcal{M}[G]}(\linepi_n^G(W_d)\otimes \Q,\linepi_n^G(Y)\otimes \Q)\ar[d]\\
[X,Y]^G\otimes \Q\ar[r]\ar[d]& \prod\limits_n
  \Hom_{\mathcal{M}[G]}(\linepi_n^G(X)\otimes \Q,\linepi_n^G(Y)\otimes\Q)\ar[d]\\
\cal{A}(G)(C_d,Y)\otimes\Q\ar[r]\ar[d]\ar[r]\ar[d]&\prod\limits_n\Hom_{\mathcal{M}[G]}(\linepi_n^G(C_d)\otimes\Q,\linepi_n^G(Y)\otimes
  \Q)\ar[d]\\
 0\ar[r]&0}
\]
Here we are identifying $\cal{A}(G)(Z,Y)$ with $[Z,Y]^G$ for
a $G$-spectrum $Z$.   By (\ref{Qtensorratlhtpyequiv}), the top two rows are isomorphisms and hence the five lemma implies that 
\[\cal{A}(G)(C_d,Y)\otimes\Q\cong \prod
\Hom_{\mathcal{M}[G]}(\linepi_n^G(C_d)\otimes\Q,\linepi_n^G(Y)\otimes
\Q).\]  
But $\linepi_*^G(C_d)\otimes\Q=0$, so in fact
$\cal{A}(G)(C_d,Y)\otimes \Q=0$. 

Therefore, the sequence $\{f_m\}$ has a constant cofinal subsequence, and we can take its value to be the desired map $f\!:X\to Y$ for which $\linepi_*^G(\overline{f})=\phi$.
\end{proof}

Thus, for a finite group $G$, the weak generating hypothesis implies
the strong generating hypothesis, just as in the non-equivariant
case.  

\section{The rational equivariant case}\label{rational}
Restricting to the category of rational $G$-spectra for a group $G$ greatly simplifies the structure of the category, and after this simplification the rational $G$-stable homotopy categories for finite and infinite compact Lie groups exhibit very different behavior.  As mentioned in the introduction, the strong generating hypothesis holds in the rational stable homotopy category for a finite group $G$, but even the weak generating hypothesis fails in the category of $\TT$-spectra, where $\TT$ is the circle group. In fact, using Greenlees's algebraic model for the category of rational $\TT$-spectra, we find an explicit counterexample to the generating hypothesis in this category.

\begin{thrm1}
The generalized Freyd conjecture does not hold for the category of rational $\TT$--equivariant spectra.
\end{thrm1}

Before proving this theorem, we prove the following lemma about the equivariant homotopy groups of the suspension spectrum $\bb{X}=\Sigma_+^\infty X$ of a free $\TT$-space $X$.

\begin{lemma} \label{ratlhomotopygrps}Let $X$ be a free rational $\TT$-space, and let $\bb{X}=\Sigma_+^\infty X$ denote the suspension spectrum of $X_+$.  Then, for any closed subgroup $H\leq \TT$,
\[\pi_*^H(\bb{X})=
\begin{cases}\pi_*(\Sigma\bb{X}/\TT) &\text{ if }H=\TT\\
\pi_*(\bb{X}) & \text{ if } H<\TT.
\end{cases}
\]
\end{lemma}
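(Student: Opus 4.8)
The plan is to compute each genuine homotopy group $\pi_*^H(\bb{X})$ by exploiting the freeness of the $\TT$-action to reduce to a non-equivariant computation, treating the finite subgroups and the subgroup $\TT$ itself separately.

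First I would dispose of the proper closed subgroups $H<\TT$, which are all finite. If $H$ is trivial there is nothing to prove, so take $H=C_n$. Restricted to $C_n$ the spectrum $\bb{X}$ is the suspension spectrum of the underlying $C_n$-space of $X$, which is $C_n$-free because $X$ is $\TT$-free; hence in the tom Dieck splitting for the finite group $C_n$ only the trivial-subgroup summand survives (every other fixed-point space $X^K$ being empty), giving $\pi_*^{C_n}(\bb{X})\cong\pi_*(\Sigma_+^\infty(EC_n\times_{C_n}X))\cong\pi_*(\Sigma_+^\infty(X/C_n))$, the last step because for free $X$ the map $EC_n\times_{C_n}X\to X/C_n$ is an equivalence. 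To conclude this case I would observe that the quotient map $q\colon X\to X/C_n$ is a rational homology equivalence: it is a regular $C_n$-covering, $C_n$ acts trivially on $H_*(X;\Q)$ since $\TT$ is connected, and the transfer then makes $q_*$ an isomorphism after rationalization. As $X$ is a rational space, $q$ induces an isomorphism $\pi_*(\bb{X})=\pi_*(\Sigma_+^\infty X)\cong\pi_*(\Sigma_+^\infty(X/C_n))$, which is what we want.

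For $H=\TT$ the essential tool is the Adams isomorphism. Since $X$ is $\TT$-free, $\bb{X}$ is a free $\TT$-spectrum, and because the abelian group $\TT$ has trivial one-dimensional adjoint representation the Adams isomorphism takes the form $\bb{X}^\TT\simeq\Sigma(\bb{X}/\TT)$, a single suspension of the genuine orbit spectrum; and since the action is free this orbit spectrum is $\Sigma_+^\infty(X/\TT)$. Passing to homotopy groups then yields $\pi_*^\TT(\bb{X})=\pi_*(\bb{X}^\TT)=\pi_*(\Sigma\bb{X}/\TT)$.

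The step I expect to demand the most care is precisely this last one: the positive dimension of $\TT$ introduces a shift by $\dim\TT=1$ that is absent for finite subgroups, so one cannot simply identify $\pi_*^\TT(\bb{X})$ with $\pi_*(\Sigma_+^\infty(X/\TT))$. Correctly accounting for the shift means invoking the Adams isomorphism with its adjoint-representation twist, and being careful that $\bb{X}/\TT$ denotes the genuine orbit spectrum, which for a free action agrees with $\Sigma_+^\infty(X/\TT)$. The finite-subgroup case is comparatively routine, once one sees that rationally $X$ and $X/C_n$ agree.
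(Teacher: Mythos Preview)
Your proof is correct and follows essentially the same route as the paper. For proper $H<\TT$ both you and the paper invoke tom~Dieck splitting to reduce to $\pi_*(\bb{X}/H)$ and then use connectedness of $\TT$ to see $H$ acts trivially on rational homology; for $H=\TT$ the paper applies the tom~Dieck splitting directly (whose $\Sigma^{\ad(W_HK)}$ term supplies the dimension shift), whereas you invoke the Adams isomorphism, but these are the same mechanism and yield the same one-step identification $\pi_*^\TT(\bb{X})\cong\pi_*(\Sigma\bb{X}/\TT)$.
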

\begin{proof}
The
tom Dieck splitting theorem ~\cite{LMS1986} states that for such an $\bb{X}$ and for any closed subgroup $H\leq \TT$, 
\begin{equation}\label{tDsplit}
\pi_*^H(\bb{X})=\sum_{K\leq H}\pi_*\left(\Sigma^\infty(EW_HK_+\wedge_{W_H K}\Sigma^{\ad(W_HK)} X^K_+)\right),
\end{equation}
where the sum runs over conjugacy classes of closed subgroups $K\leq H$ and $\ad(W_HK)$ is the adjoint representation of the Weyl group $W_HK$.  Let us first consider $\pi_*^\TT(\bb{X})$. Since $X$ is free, the space $X^K_+$ is a point unless $K$ is the trivial group. Hence the only summand that contributes to $\pi_*^\TT(\bb{X})$ is the summand corresponding to the trivial group $1\leq \TT$. The Weyl group of $1\leq \TT$ is $\TT$ itself, and the adjoint representation of $\TT$ is trivial. Thus
\begin{align*}\pi_*^\TT(\bb{X})&=\pi_*\left(\Sigma^\infty(E\TT_+\wedge_\TT\Sigma X_+)\right)\\
&=\pi_*(\Sigma \bb{X}/\TT).
\end{align*}

Now suppose $H$ is a proper closed subgroup of $\TT$.  Then $H$ is finite, so $\ad(W_HK)$ is zero for all $K\leq H$. Since $X$ is free, the summand of (\ref{tDsplit}) corresponding the trivial group is the only summand contributing to $\pi^H_*(\bb{X})$, just as in the previous case. 
Hence $\pi_*^H(\bb{X})$ reduces to 
\begin{align*}\pi_*^H(\bb{X})&=\pi_*\left(\Sigma^\infty(EH_+\wedge_H X_+)\right)\\
&=\pi_*(\bb{X}/H).
\end{align*} 
 Since we are working
rationally, $\pi_*(\bb{X}/H)=H_*(\bb{X}/H)$.  As $H$ is necessarily finite, $H_*(\bb{X}/H)=H_*(\bb{X})/H$, but
since $H$ is acting as a subgroup of the connected group $\TT$, $H$
must in fact act trivially on $H_*(\bb{X})$.  Thus $\pi_*(\bb{X}/H)=H_*(\bb{X})=\pi_*(\bb{X})$, where the second equality again follows by rationality.  That is,
for any proper closed subgroup $H\leq \TT$, the  $H$-homotopy groups
$\pi_*^H(\bb{X})$ are in fact the non-equivariant homotopy groups
$\pi_*(\bb{X})$.
\end{proof}

\begin{proof}[Proof of Theorem \ref{GHforrat'lTTspectra}] We restrict our attention to free rational $\TT$-spectra and use the results of Greenlees in ~\cite{Greenlees1999}.  Throughout the following proof, we assume that all spectra are rational without explicitly indicating this in our notation.

Let $\bb{X}$ and $\bb{Y}$ be finite rational $\TT$-spectra.  The equivariant Freyd conjecture asserts that a map $f:\bb{X}\to \bb{Y}$ is zero if and only if it induces the zero map on $\pi_*^H(-)$ for all closed subgroups $H\leq \TT$. We give a counterexample where $\bb{X}$ and $\bb{Y}$ happen to be free rational $\TT$-spectra.

Greenlees proves ~{\cite[3.1.1]{Greenlees1999}} that for free rational $\TT$-spectra, there is a natural Adams short exact sequence
\[0\to\Ext_{\Q[c]}(\pi_*^\TT(\Sigma \bb{X}),\pi_*^\TT(\bb{Y}))\to [\bb{X},\bb{Y}]^\TT_*\to \Hom_{\Q[c]}(\pi_*^\TT(\bb{X}),\pi_*^{\TT}(\bb{Y}))\to 0,\]
where $\Q[c]$ is the graded polynomial algebra an element $c$
in dimension $-2$.  This element $c$ comes from the Euler class of the representation of $\TT$ on $\C$ by multiplication \cite[Thm. 2.4.1]{Greenlees1999}.  Note that $\Hom$ and $\Ext$ are graded, and $\Ext$ here means $\Ext^{1,*}$.  This
short exact sequence is our main tool in finding a counterexample to the
equivariant Freyd conjecture.

Let $X$ and $Y$ be free rational $\TT$-spaces, and let $\bb{X}=\Sigma_+^\infty X$ and $\bb{Y}=\Sigma^\infty_+ X$.  
Consider the space
$\TT_+\wedge X_+$ with $\TT$ acting diagonally.  There is an isomorphism $[\Sigma^\infty(\TT_+\wedge X_+),
\bb{Y}]^\TT_*\cong [\bb{X},\bb{Y}]_*$ between equivariant homotopy classes of maps from
$\Sigma^{\infty}(\TT_+\wedge X_+)$ to $\bb{Y}$ and non-equivariant homotopy classes of maps
from $\bb{X}$ to $\bb{Y}$, and since $\bb{X}$ and $\bb{Y}$ were assumed to be rational,
$[\bb{X},\bb{Y}]_*$ is also isomorphic to $\Hom_\Q(\pi_*(\bb{X}),\pi_*(\bb{Y}))$. 

Write $\Q[c]=k$ and $\Sigma^\infty(\TT_+\wedge X_+)=\bb{W}$ for conciseness.  The projection $p\!:\bb{W}\to \bb{X}$ induces a map of short exact
sequences
{
\[
\xymatrix{0\ar[r]&\Ext_{k}(\pi_*^\TT(\Sigma
\bb{X}),\pi_*^\TT(\bb{Y}))\ar[r]\ar[d]_{p_*}&[\bb{X},\bb{Y}]^\TT_*\ar[d]_{p_*}\ar[r]&\Hom_{k}(\pi_*^\TT(\bb{X}),\pi_*^\TT(\bb{Y}))\ar[r]\ar[d]_{p_*}&0\\
0\ar[r]&\Ext_{k}(\pi_*^\TT(\Sigma \bb{W}),\pi_*^\TT(\bb{Y}))\ar[r]&[\bb{W},
 \bb{Y}]^\TT_*\ar[r]&\Hom_{k}(\pi_*^\TT(\bb{W}),\pi_*^\TT(\bb{Y}))\ar[r]&0}
\]}
which can be rewritten as 
\[
\xymatrix@C=9.5pt{0\ar[r]&\Ext_{k}(\pi_*^\TT(\Sigma
  \bb{X}),\pi_*^\TT(\bb{Y}))\ar[r]\ar[d]_{p_*}&[\bb{X},\bb{Y}]^\TT_*\ar[d]_{p_*}\ar[r]&\Hom_{k}(\pi_*^\TT(\bb{X}),\pi_*^\TT(\bb{Y}))\ar[r]\ar[d]_{p_*}&0\\
0\ar[r]&\Ext_{k}(\pi_*^\TT(\Sigma \bb{W}),\pi_*^\TT(\bb{Y}))\ar[r]&\Hom_\Q(\pi_*(\bb{X}),\pi_*(\bb{Y}))\ar[r]&\Hom_{k}(\pi_*^\TT(\bb{W}),\pi_*^\TT(\bb{Y}))\ar[r]&0}
\]
Let $f\!:\bb{X}\to \bb{Y}$ be a map inducing the zero
map $\pi_*^H(\bb{X})\to \pi_*^H(\bb{Y})$ for all subgroups $H\leq \TT$. By Lemma \ref{ratlhomotopygrps} this is equivalent to assuming that  $f_*:\pi_*^\TT(\bb{X})\to \pi_*^\TT(\bb{Y})$ and
$f_*:\pi_*(\bb{X})\to \pi_*(\bb{Y})$ are both zero.  In other words, $p_*(f)$ is zero in
\[\Hom_\Q(\pi_*(\bb{X}),\pi_*(\bb{Y}))=\Hom_\Q(\pi_*^H(\bb{X}),\pi_*^H(\bb{Y}))\] and also $f$ maps to zero in $\Hom_{k}(\pi_*^\TT(\bb{X}),\pi_*^\TT(\bb{Y}))$.  Hence $f$ is either zero, or lifts to a nontrivial element of
$\Ext_{k}(\pi_*^\TT(\Sigma \bb{X}),\pi_*^\TT(\bb{Y}))$.  To show that the
generating hypothesis fails for the category of rational
$\TT$--spectra, it is enough to show that the map
\begin{align}\label{kernel}
\Ext_{k}(\pi_*^\TT(\Sigma \bb{X}),\pi_*^\TT(\bb{Y}))\to
\Ext_{k}(\pi_*^\TT(\Sigma \bb{W}),\pi_*^\TT(\bb{Y}))
\end{align}
 has a
nontrivial kernel.  Any element of this kernel must be nonzero as an element of $[\bb{X},\bb{Y}]^\TT_*$, but must map to zero in the group $[\bb{W},\bb{Y}]^\TT_*$ by definition, and will map to zero in $\Hom_{k}(\pi_*^\TT(\bb{X}),\pi_*^\TT(\bb{Y}))$ by exactness. Thus such an element gives a counterexample to the generating hypothesis.

We now give a concrete example where the map (\ref{kernel}) has a nontrivial kernel. The group $\TT$ acts on the complex plane $\C$ by multiplication.  Consider the diagonal action of $\TT$ on $\C^a$, for some positive integer $a$. Let $S(a)$ be the unit sphere in this representation of $\TT$ and consider the rational suspension spectrum of $S(a)_+$.  We will denote this spectrum by $\bb{S}(a)$. We claim that for integers $a$, $b>1$, the spectra $\bb{S}(a)$ and $\bb{S}(b)$ provide a counterexample to the equivariant generating hypothesis.

To prove this claim, it suffices to calculate the groups 
\[\Ext_{\Q[c]}\left(\pi_*^\TT(\Sigma \bb{S}(a)),\pi_*^\TT(\bb{S}(b))\right) \textrm{ and } \Ext_{\Q[c]}\left(\pi_*^\TT(\Sigma \bb{W}),\pi_*^\TT(\bb{S}(b))\right)\] 
where now $\bb{W}=\Sigma^\infty(\TT_+\wedge S(a)_+)$. 
First note that Lemma \ref{ratlhomotopygrps} implies
$\pi_*^\TT(\bb{S}(a))=\pi_*(\Sigma \bb{S}(a)/\TT)$.
Since $\bb{S}(a)$ is the suspension spectrum of $S(a)_+$, we pass to the space level to see that
\begin{align*}
\pi_*(\Sigma\bb{S}(a))&=\pi_*(\Sigma \C P^{a-1})\\
&=\Sigma^{2a-1}\Q[c]/(c^a)
\end{align*}
as a graded module over $\Q[c]$. Similarly, Lemma \ref{ratlhomotopygrps} implies $\pi_*^\TT(\bb{W})=\pi_*(\Sigma \bb{S}(a))$, and passing to the space level yields
\begin{align*}\pi_*(\Sigma \bb{S}(a))&=\pi_*(\Sigma S^{2a-1})\\
&=\Sigma \Q\oplus \Sigma^{2a}\Q,
\end{align*}
again as a graded module over $\Q[c]$.  Hence our $\Ext$ calculations reduce to finding the groups
\[\Ext_{\Q[c]}\left(\Sigma^{2a}\Q[c]/(c^a),\Sigma^{2b-1}\Q[c]/(c^b)\right)\]  and  \[\Ext_{\Q[c]}\left(\Sigma^2\Q\oplus\Sigma^{2a+1}\Q,\Sigma^{2b-1}\Q[c]/(c^b)\right).\]  

To calculate the first group, we use the projective resolution
\[0\to \Q[c]\stackrel{c^{a}}{\To} \Sigma^{2a}\Q[c]\to \Sigma^{2a}\Q[c]/(c^a)\to 0
\]
of $\Sigma^{2a}\Q[c]/(c^a)$. 
Applying $\Hom_{\Q[c]}\left(-,\Sigma^{2b-1}\Q[c]/(c^b)\right)$ to this resolution gives an exact sequence
\begin{multline*}\dotsb\to\Sigma^{2b-1-2a}\Q[c]/(c^b)\stackrel{c^a}{\To} \Sigma^{2b-1}\Q[c]/(c^b) \\
\to \Ext_{\Q[c]}\left(\Sigma^{2a}\Q[c]/(c^a),\Sigma^{2b-1}\Q[c]/(c^b)\right)\to 0
\end{multline*}
so that 
\[\Ext_{\Q[c]}\left(\Sigma^{2a}\Q[c]/(c^a),\Sigma^{2b-1}\Q[c]/(c^b)\right)=
\begin{cases} \Sigma^{2b-1}\Q[c]/(c^a) & \text{ if $a\leq b$},\\ 
\Sigma^{2b-1}\Q[c]/(c^b) &\text{ if $a>b$.}
\end{cases}
\]

To calculate $\Ext_{\Q[c]}\left(\pi_*^\TT(\Sigma \bb{W}),\pi_*^\TT(\bb{S}(b))\right)$, we use the projective resolution
\[ 0\to \Q[c]\oplus\Sigma^{2a-1}\Q[c] \to \Sigma^2\Q[c]\oplus \Sigma^{2a+1}\Q[c]\to \Sigma^2\Q\oplus\Sigma^{2a+1}\Q\to 0\]
of $\Sigma^2\Q\oplus \Sigma^{2a+1}\Q$.  Applying $\Hom\left(-,\Sigma^{2b-1}\Q[c]/(c^b)\right)$ to this sequence gives the exact sequence
\begin{multline*}\dotsb\to \Sigma^{2b-3}\Q[c]/(c^b)\oplus\Sigma^{2b-2a-2}\Q[c]/(c^b)\\\to \Sigma^{2b-1}\Q[c]/(c^b)\oplus\Sigma^{2b-2a}\Q[c]/(c^b)\\\to \Ext_{\Q[c]}(\Sigma^2\Q\oplus\Sigma^{2a+1}\Q,\Sigma^{2b-1}\Q[c]/(c^b))\to 0.
\end{multline*}
Hence 
\[\Ext_{\Q[c]}\left(\Sigma^2\Q\oplus\Sigma^{2a+1}\Q,\Sigma^{2b-1}\Q[c]/(c^b)\right)=\Sigma^{2b-1}\Q\oplus \Sigma^{2b-2a}\Q.\]
  For any positive integer $n$, there are two non-zero maps from
  $\Sigma^{2b-1}\Q[c]/(c^n)$ to
  $\Sigma^{2b-1}\Q\oplus\Sigma^{2b-2a}\Q$, one of even degree and one
  of odd degree. If $n\geq 2$, neither of these maps is injective.
  Hence, for $a, b>1$, the map (\ref{kernel}) cannot be injective. Any
  nontrivial element of the kernel of (\ref{kernel}) yields a
  nontrivial map $\bb{S}(a)\to \bb{S}(b)$ that induces the trivial map
  on homotopy.
  Therefore $\bb{S}(a)$ and $\bb{S}(b)$ provide the desired counterexample to the equivariant generating hypothesis for the category of rational $\TT$-spectra.
\end{proof}

This result suggests that, at least rationally, the generating hypothesis should fail for any infinite compact Lie group, although we have not shown this.  The failure of the generating hypothesis in this case is perhaps surprising in light of the characterization of rings $R$ for which the strong generating hypothesis holds in the derived category $\cal{D}(R)$.  Hovey, Lockridge and Puninski  prove that the generating hypothesis holds in $\cal{D}(R)$ if and only if $R$ is von Neumann regular \cite[Thm.~1.3]{HLP2007}.  Since the rationalized Burnside ring of a compact Lie group is von Neumann regular, one might expect the generating hypothesis to hold in the rational equivariant stable category; however, we have shown that this is not the case.

\bibliography{GHbib}{}
\bibliographystyle{amsplain}
\end{document}